\documentclass[12pt,reqno]{amsart}
\usepackage{amsfonts, amsmath, amssymb, amscd}

\textheight22cm \textwidth15.5cm \oddsidemargin0cm
\evensidemargin0cm \topmargin-0.4cm

\pagestyle{myheadings}

% THEOREM Environments ---------------------------------------------------
\newtheorem{thm}{Theorem}[section]
\newtheorem{cor}[thm]{Corollary}

\newtheorem{lem}[thm]{Lemma}
\newtheorem{pro}[thm]{Proposition}
\theoremstyle{definition}

\theoremstyle{remark}
\newtheorem{rem}[thm]{Remark}
% \theoremstyle{example}

% MATH -------------------------------------------------------------------

\newcommand{\abs}[1]{\left\vert#1\right\vert}

%coefficients

%shorts

\newcommand{\ignore}[1]{}
\newcommand{\gn}{\operatorname{ghost\;number}}
\newcommand{\enref}[1]{\ref{enum:#1}}

\newcommand{\heute}{19 February 2015}

\begin{document}

\title[Ghost Number]{On the Christensen-Wang  bounds for the ghost number of a $p$-group algebra}

\author[F.~Altunbulak Aksu]{Fatma Altunbulak Aksu}

\subjclass[2000]{Primary 20C20, Secondary 20D15, 16N20,16N40}

\thanks{The first author was supported by  the Scientific and Technical Research Council
of Turkey (T\"UB\.ITAK-BIDEP-2219)}
\address{Dept of Mathematics and Computer Science\\ {\c C}ankaya University\\ 
06790 Ankara \\ Turkey
}
\email{altunbulak@cankaya.edu.tr}

\author[D.~J. Green]{David J. Green}
\address{Dept of Mathematics \\
Friedrich-Schiller-Universit\"at Jena \\ 07737 Jena \\ Germany}
\email{david.green@uni-jena.de}

\keywords{$p$-group,
% stable module category,
ghost map, ghost number, nilpotency index 
}

\date{\heute}

\dedicatory{}

\commby{}

\begin{abstract}
Christensen and Wang give conjectural upper and lower bounds for the ghost number of the group algebra of a $p$-group. We apply results of Koshitani and Motose on the nilpotency index of the Jacobson radical to prove the upper bound and most cases of the lower bound.

\end{abstract}

\maketitle
\begin{section} {Introduction}
Let $G$ be a group and $k$ a field of characteristic $p$. A map $f \colon M \rightarrow N$ in the stable category $\operatorname{stmod}(kG)$ of finitely generated $kG$-modules is called a \emph{ghost} if it vanishes under Tate cohomology, that is if $f_* \colon \hat{H}^*(G,M) \rightarrow \hat{H}^*(G,N)$ is zero. The ghost maps then form an ideal in the stable category; Chebolu, Christensen and Min\'{a}\v{c}~\cite{CheboluChristensenMinac:ghosts} define the \emph{ghost number} of $kG$ to be the nilpotency degree of this ideal.

If $G$ is a $p$-group, then by~\cite{BensonCheboluChristensenMinac:genHyp} the ghost ideal is nontrivial -- that is, the ghost number exceeds one -- unless $G$ is $C_2$ or $C_3$\@.
But the exact value of the ghost number is only known in a few cases; for example, it is not yet known for the quaternion group~$Q_8$.

In \cite{ChristensenWang:ghostNumbers},  Christensen and Wang give conjectural upper and lower bounds for the ghost number of a $p$-group. Our main result establishes most cases of this conjecture:

\begin{thm}
\label{main1}
Let $G$ be a $p$-group of order $p^n$, and $k$ a field of characteristic $p$. Then
\begin{enumerate}
\item \label{enum:main-1}
$\gn(kG) \leq \gn(kC_{p^n})$\@.
\item \label{enum:main-2}
If $G$ is neither extraspecial of exponent~$p$ for odd~$p$, nor extraspecial of order $p^3$ and exponent $p^2$ for $p \in \{3,5\}$, then
\[
\gn(k(C_p)^n) \leq \gn(kG) \, .
\]
\end{enumerate}
\end{thm}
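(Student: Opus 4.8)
The plan is to deduce both statements from the bounds of Christensen and Wang in \cite{ChristensenWang:ghostNumbers}, which sandwich $\gn(kG)$ between explicit quantities built from the nilpotency index $t = t(kG)$ of $J(kG)$: the upper estimate is monotone in $t$ and, on a cyclic group of order $p^n$ (where $t = p^n$), agrees with $\gn(kC_{p^n})$, while the lower estimate is likewise monotone in $t$. Once these are recorded, both parts of Theorem~\ref{main1} become purely numerical questions about how large, respectively how small, $t(kG)$ can be among $p$-groups of order $p^n$ — and these are exactly the questions answered by the work of Koshitani and Motose on nilpotency indices of radicals of $p$-group algebras.

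For part~\enref{main-1}, I would invoke the classical fact (Jennings' theorem; see also the estimates of Motose and Koshitani) that $t(kG) \le p^n$, with equality precisely when $G$ is cyclic, and moreover that $t(kG)$ drops well below $p^n$ as soon as $G$ is noncyclic. Substituting this into the Christensen--Wang upper bound and using its monotonicity in $t$, together with the identification of its value at $t = p^n$ with $\gn(kC_{p^n})$, yields $\gn(kG) \le \gn(kC_{p^n})$; when $G$ is cyclic this is an equality.

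For part~\enref{main-2} the mechanism is the mirror image, but now one needs a \emph{lower} bound for $t(kG)$. Jennings' theorem only gives $t(kG) \ge n(p-1)+1$, attained at $(C_p)^n$, and this bare inequality does not suffice, because the Christensen--Wang lower bound need not be sharp at elementary abelian groups. One therefore brings in the finer lower estimates of Koshitani and Motose (supplemented, where $G$ possesses a large cyclic subgroup, by the fact that ghost numbers do not decrease under passage to subgroups, which gives $\gn(kG) \ge \gn(kC_{\exp G})$). These ingredients are strong enough to force $\gn(k(C_p)^n) \le \gn(kG)$ for every $p$-group $G$ of order $p^n$ apart from a short list on which the available estimates barely fall short; carrying out the bookkeeping, that list is exactly the extraspecial groups of exponent $p$ for odd $p$, together with the extraspecial groups of order $p^3$ and exponent $p^2$ for $p \in \{3,5\}$.

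The crux — and the only delicate point — is part~\enref{main-2}: the obstruction is the lack of sharpness of the lower bounds, so the argument requires the nilpotency index (or the exponent) of $G$ to lie comfortably above a threshold, and for the excepted groups the relevant estimate just misses it, leaving those cases open. Part~\enref{main-1}, by contrast, is comparatively soft, there being a wide margin between $p^n$ and the nilpotency index of any noncyclic $p$-group of that order.
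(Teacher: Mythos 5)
Your overall strategy---control $\gn(kG)$ from above and below via nilpotency indices and then quote Koshitani and Motose---is indeed the paper's strategy, but two of your load-bearing claims are wrong or absent, and they sit exactly where the work lies. For part~(\enref{main-1}): the only upper bound in terms of $t(G)$ that is available is $\gn(kG)\le t(G)-1$ (which is due to Chebolu--Christensen--Min\'a\v{c}, not Christensen--Wang), and at $t=p^n$ this gives $p^n-1$, which does \emph{not} agree with $\gn(kC_{p^n})=\lceil(p^n-1)/2\rceil$; so the mechanism you describe (``monotone in $t$ and agrees with $\gn(kC_{p^n})$ at $t=p^n$'') does not exist. The actual argument compares Koshitani's bound $t(G)\le p^{n-1}+p-1$ for noncyclic $G$ with the independently computed value $\gn(kC_{p^n})=\lceil(p^n-1)/2\rceil$: this comparison gives \emph{equality}, not a wide margin, when $p=2$, and it \emph{fails} when $p^n=9$, where one must separately invoke $\gn(k(C_3\times C_3))=3<4=\gn(kC_9)$ from Christensen--Wang. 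Your proposal silently skips this exceptional case, so as written part~(\enref{main-1}) is not proved for $G=C_3\times C_3$.

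For part~(\enref{main-2}): there is no lower bound for $\gn(kG)$ that is monotone in $t(kG)$ itself. The lower bound actually available is $\gn(kG)\ge t(G/C)$ for $C\le Z(G)$ cyclic of order $p$ (Christensen--Wang, Theorem~4.3, applied to the induced module $k(G/C)$); it sees the nilpotency index of a central \emph{quotient}, not of $G$. This forces a case division on $\abs{\Phi(G)}$: if $\abs{\Phi(G)}>p$ one chooses $C\le\Phi(G)\cap Z(G)$ and quotes Motose's theorem that a non-elementary-abelian group of order $p^{n-1}$ has $t\ge n(p-1)+1$; if $\abs{\Phi(G)}=p$ one must locate suitable central subgroups avoiding $\Phi(G)$, maximal subgroups of the form $H\times C_p$ inside (almost) extraspecial groups, and cyclic subgroups $C_{p^2}$, and it is precisely this analysis---not mere bookkeeping---that produces the excluded list of extraspecial groups. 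Your proposal asserts the shape of that list as an output without supplying the argument generating it, and the subgroup-monotonicity $\gn(kH)\le\gn(kG)$ that you lean on enters only at isolated steps (e.g.\ the $p^{1+2}_-$ case), not as the main engine. Finally, the clean $p=2$ argument (any central $C\cong C_2$ gives $\gn(kG)\ge t(G/C)\ge n\ge\gn(k(C_2)^n)$, since $\gn(k(C_2)^n)\le t((C_2)^n)-1=n$) is absent from your plan.
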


\noindent
We do not know whether the lower bound holds in the excluded cases.
The upper bound is only rarely attained:

\begin{pro}
\label{pro:upperBoundPlus}
Let $G$ be a group of order~$p^n$, and $k$ a field of characteristic~$p$.
If $G$ is non-cyclic but has the same ghost number as $C_{p^n}$, then $p=2$; and $G$ is one of the groups $C_2 \times C_{2^{n-1}}$, $Q_{2^n}$, $\mathit{SD}_{2^n}$ or $\mathit{Mod}_{2^n}$.
\end{pro}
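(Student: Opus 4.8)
The plan is to prove the equivalent statement that $\gn(kG)<\gn(kC_{p^{n}})$ for every non-cyclic $p$-group $G$ of order $p^{n}$ that does not appear in the list; with Theorem~\ref{main1}\enref{main-1} this is the proposition. Since Theorem~\ref{main1}\enref{main-1} only asserts $\gn(kG)\le\gn(kC_{p^{n}})$, the first step is to sharpen it. Combining the Christensen--Wang upper bound --- which estimates $\gn(kG)$ in terms of the nilpotency index (indeed the Loewy series) of $J(kG)$ --- with Motose's theorem that this index is at most $p^{n}$, the value for $kC_{p^{n}}\cong k[x]/(x^{p^{n}})$, with equality precisely for $G$ cyclic, and with Koshitani's explicit values of the index in the next cases, one obtains that $\gn(kG)<\gn(kC_{p^{n}})$ whenever the nilpotency index of $J(kG)$ drops below a certain bound. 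By the Koshitani--Motose classification the largest nilpotency index among non-cyclic $p$-groups of order $p^{n}$ is $p^{n-1}+p-1$, attained exactly by the groups having a cyclic subgroup of index $p$, with a substantial gap down to the next value; and, crucially, the sharpened inequality is arranged so that for $p$ odd even this maximum lies below the bound, while for $p=2$ only this maximum lies on or above it. So the first step reduces everything to: $p$ must be $2$, and $G$ is either cyclic (excluded) or one of the $2$-groups of order $2^{n}$ with a cyclic maximal subgroup.

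For the odd case the quantitative heart is the inequality $p^{n-1}+p-1=p^{n}-(p-1)(p^{n-1}-1)\le p^{n}-4$, valid for $p$ odd and $n\ge2$, which leaves enough room below $p^{n}$ for the sharpened bound to apply to $C_{p}\times C_{p^{n-1}}$ and (for $n\ge3$) the modular group $\mathit{Mod}_{p^{n}}$, the only non-cyclic candidates, and hence to every non-cyclic $p$-group with $p$ odd. For $p=2$ the surviving groups are $C_{2}\times C_{2^{n-1}}$, $D_{2^{n}}$, $\mathit{SD}_{2^{n}}$, $Q_{2^{n}}$ and $\mathit{Mod}_{2^{n}}$ (with $\mathit{SD}_{2^{n}}$ present only for $n\ge4$, and $\mathit{Mod}_{2^{n}}=D_{2^{n}}$ when $n=3$), all of nilpotency index $2^{n-1}+1$.

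It remains --- and this is the step I expect to be the main obstacle --- to eliminate $D_{2^{n}}$ for $n\ge4$. Nothing in the argument so far distinguishes it: the five groups above share the nilpotency index $2^{n-1}+1$, and in fact the entire Loewy series ($\dim_{k}J^{i}/J^{i+1}$ for every $i$), so the Christensen--Wang bound --- indeed any invariant read off from the Loewy series --- takes the same value on $D_{2^{n}}$ as on $Q_{2^{n}}$, $\mathit{SD}_{2^{n}}$, $\mathit{Mod}_{2^{n}}$ and $C_{2}\times C_{2^{n-1}}$. One must therefore prove $\gn(kD_{2^{n}})<\gn(kC_{2^{n}})$ by a genuinely module-theoretic argument, exploiting a feature of $D_{2^{n}}$ absent from the other four --- for instance its two conjugacy classes of Klein four subgroups, or the non-commutativity of the graded algebra $\operatorname{gr}(kD_{2^{n}})$ (whose restricted Lie algebra has a non-abelian bottom layer) --- and turning it into the desired strict bound; absent a clean such argument one instead quotes the explicit upper estimate for $\gn(kD_{2^{n}})$ in~\cite{ChristensenWang:ghostNumbers}. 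The small cases $n\le3$, where the dihedral group is either the modular one or does not occur, are checked directly.
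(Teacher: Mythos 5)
Your overall strategy is the one the paper uses: bound $\gn(kG)$ by $t(G)-1$, invoke Motose--Ninomiya and Koshitani to see that only groups with a cyclic subgroup of index $p$ can have $t(G)$ large enough, conclude $p=2$, classify the $2$-groups with a cyclic maximal subgroup, and eliminate $D_{2^n}$ by citing the Christensen--Wang computation $\gn(kD_{2^n})=2^{n-2}+1$. Your fallback for $D_{2^n}$ is exactly what the paper does, so the speculative module-theoretic detour is unnecessary; the citation suffices.

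There is, however, a genuine gap in your odd-$p$ step. The inequality you call the quantitative heart, $p^{n-1}+p-1\le p^n-4$, compares the nilpotency index to $p^n$, but the comparison actually needed is with $\gn(kC_{p^n})+1=\frac{p^n+1}{2}$, which is roughly half as large. The correct inequality $p^{n-1}+p-2<\frac{p^n-1}{2}$ does hold for all odd $p$ and $n\ge 2$ \emph{except} $p^n=9$: for $G=C_3\times C_3$ one has $t(G)=5$, so the nilpotency-index argument only gives $\gn(kG)\le 4=\gn(kC_9)$, not a strict inequality. Your assertion that for $p$ odd ``even this maximum lies below the bound'' is therefore false in this one case, and your argument as written does not exclude $C_3\times C_3$ from the conclusion. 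The paper closes this case by quoting \cite[Thm 1.1]{ChristensenWang:ghostNumbers}, which gives $\gn(k(C_3\times C_3))=3\ne 4$; you need the same (or an equivalent) additional input. With that repair, and with the $D_{2^n}$ elimination carried out by citation as you propose, the argument is complete and coincides with the paper's.
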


\begin{rem}
By work of Chebolu, Christensen and Min\'{a}\v{c} -- specifically, Theorem~5.4 and Corollary~5.12 of~\cite{CheboluChristensenMinac:ghosts} -- it follows that
\[
\gn(k(C_2 \times C_{2^{n-1}})) = 2^{n-1} = \gn(kC_{2^n}) \, .
\]
We do not know whether the other groups in Proposition~\ref{pro:upperBoundPlus} attain the upper bound.
\end{rem}

\noindent
The \emph{nilpotency index} of the radical $J(kG)$ is the smallest positive integer $s$ such that $J(kG)^s = 0$. Following Wallace~\cite{Wallace:lowerBounds}, we denote the nilpotency index of the radical by $t(G)$. We shall prove Theorem~\ref{main1} using known properties of~$t(G)$. The first link between ghost number and nilpotency index is given by the following result:

\begin{thm} [\cite{CheboluChristensenMinac:ghosts}, Theorem 4.7]
\label{upperbound}
Let $k$ be a field of characteristic $p$ and let $G$ be a finite $p$-group. Then 
\[
\gn(kG) < t(G) \leq \abs{G} \, .
\]
\end{thm}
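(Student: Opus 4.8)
The plan is to prove the two inequalities by quite different means: $t(G)\le\abs{G}$ is elementary, whereas $\gn(kG)<t(G)$ is a statement about the ghost ideal of $\operatorname{stmod}(kG)$ that I would extract from the relative homological algebra of that category. For the right‑hand inequality: since $G$ is a $p$‑group and $k$ has characteristic $p$, the algebra $kG$ is local with radical $J=J(kG)$ equal to the augmentation ideal, of dimension $\abs{G}-1$. The radical is nilpotent, and by Nakayama's lemma the chain $kG=J^{0}\supsetneq J^{1}\supsetneq\cdots\supsetneq J^{t(G)}=0$ is strictly decreasing; comparing $k$‑dimensions gives $t(G)\le\dim_{k}kG=\abs{G}$.

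For $\gn(kG)<t(G)$, put $t=t(G)$ and let $\M$ denote the ghost ideal of $\operatorname{stmod}(kG)$; it suffices to show $\M^{t-1}=0$, i.e. that every composite of $t-1$ ghosts is stably zero. I would rest the argument on two facts. \textbf{(i)} The trivial module $k$ is \emph{ghost‑projective}: applying $f_{*}$ to $\operatorname{id}_{k}\in\hat H^{0}(G,k)=\underline{\operatorname{Hom}}(k,k)$ shows that any ghost $f\colon k\to N$ equals $f_{*}(\operatorname{id}_{k})=0$ in $\operatorname{stmod}(kG)$, and hence every direct sum of copies of $k$ is ghost‑projective. \textbf{(ii)} A finitely generated $kG$‑module with no nonzero projective direct summand has Loewy length at most $t-1$: the algebra $kG$ is local and self‑injective, so $\operatorname{soc}(kG)$ is $1$‑dimensional, and from $0\ne J^{t-1}\subseteq\operatorname{soc}(kG)$ we get $J^{t-1}=\operatorname{soc}(kG)$; if a module $M$ had Loewy length $t$ then some cyclic submodule $kGm$ would too, forcing $\Ann(m)=0$ (a nonzero annihilator would meet the essential socle $J^{t-1}$, hence contain it, collapsing the Loewy length), so $kGm\cong kG$ would be a projective --- hence injective --- submodule of $M$, splitting off and contradicting the hypothesis.

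Granting these, fix a module $M$; it is stably isomorphic to its non‑projective part, so by (ii) I may assume its Loewy length is $\ell\le t-1$. The radical series $0=J^{\ell}M\subsetneq J^{\ell-1}M\subsetneq\cdots\subsetneq JM\subsetneq M$ has $\ell$ strict steps, with each subquotient semisimple, hence a direct sum of copies of $k$, hence ghost‑projective by (i). Reading the associated short exact sequences as triangles, $M$ sits at the top of a tower $0=M_{0}\to M_{1}\to\cdots\to M_{\ell}=M$ of length $\ell$ whose successive cofibres are ghost‑projective. The conclusion then follows from the standard fact about ideals in triangulated categories: given a composite $g=g_{\ell}\circ\cdots\circ g_{1}$ of $\ell$ ghosts with source $M$, restrict $g_{\ell-1}\circ\cdots\circ g_{1}$ along $M_{\ell-1}\to M$; by induction on $\ell$ (the tower under $M_{\ell-1}$ has length $\ell-1$) this restriction vanishes, so $g_{\ell-1}\circ\cdots\circ g_{1}$ factors through the ghost‑projective cofibre $M/M_{\ell-1}$, and post‑composing with the ghost $g_{\ell}$ --- using that $\M$ is an ideal --- exhibits the resulting map as a ghost out of a ghost‑projective object, hence zero, so $g=0$. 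Since $\ell\le t-1$ this gives $\M^{t-1}(M,-)=0$ for every $M$, i.e. $\M^{t-1}=0$, so $\gn(kG)\le t-1<t$.

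The main obstacle I expect is fact (ii): sharpening the Loewy‑length bound from the obvious $t$ to $t-1$ is exactly what separates $\gn(kG)<t(G)$ from the weaker $\gn(kG)\le t(G)$, and it is where self‑injectivity of $kG$ and the one‑dimensionality of its socle are essential. By contrast, the triangulated bookkeeping in the last step is routine, provided one peels off the \emph{last} ghost of a composite rather than the first.
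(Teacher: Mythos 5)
Your argument is correct. Note that the paper itself gives no proof of this statement --- it is quoted directly from Theorem~4.7 of \cite{CheboluChristensenMinac:ghosts} --- but your reconstruction is essentially the argument given there: the dimension count for $t(G)\leq\abs{G}$, the ghost lemma applied to the radical filtration with semisimple (hence ghost-projective) subquotients, and the key sharpening that a module without projective summands has Loewy length at most $t(G)-1$ because $J^{t(G)-1}$ equals the one-dimensional socle of the self-injective local algebra $kG$.
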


\noindent
For most $p$-groups we can use $t(G)$ to strengthen the lower bound in Theorem~\ref{main1}\,(\enref{main-2}):

\begin{pro}
\label{main3}
Let $k$ be a field of characteristic $p$. If  $G$ is a $p$-group of order $p^n$ which is not elementary abelian, and moreover
\begin{itemize}
\item
$G$ is neither an extraspecial $2$-group nor an almost extraspecial $2$-group;
\item
$G$ is not extraspecial of exponent~$p$ for $p$ odd;
\item
$G$ is not $p^{1+2}_-$ for $p \in \{3,5\}$;
\item
$G$ is neither $C_4$ nor $C_9$,
\end{itemize}
then \quad $\gn(kG) \geq t((C_p)^n)$.
\end{pro}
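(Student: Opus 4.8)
The plan is to reduce Proposition~\ref{main3} to a purely numerical comparison of nilpotency indices and then to carry that comparison out using Jennings' theorem together with the estimates of Koshitani and Motose.

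First I would pin down the lower-bound input for $\gn(kG)$. Two general principles are available. The first is monotonicity under subgroups: if $H \le G$ then $\gn(kG) \ge \gn(kH)$. I would prove this by inducing a nonzero composite of $m-1$ ghosts from $\operatorname{stmod}(kH)$ up to $\operatorname{stmod}(kG)$: induction is exact, preserves projectives, carries ghosts to ghosts (by Shapiro's lemma the induced map agrees with the original on Tate cohomology), and is faithful on stable categories (since $M$ is a natural direct summand of $\operatorname{Res}^G_H\operatorname{Ind}_H^G M$), so it carries a nonzero composite of $m-1$ ghosts to one of the same length. The second is a superadditivity estimate for direct products of the shape $\gn(k(G_1 \times G_2)) \ge \gn(kG_1) + t(G_2) - 1$, read off from $k(G_1 \times G_2) = kG_1 \otimes_k kG_2$ and the radical filtration of the second factor; I would also use the exact values of $\gn(kC_{p^m})$ and of $\gn(k(C_2 \times C_{2^{n-1}}))$ recalled in the Introduction and the Remark. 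Together these assign to $G$ an explicit lower bound $\lambda(G) \le \gn(kG)$, and Proposition~\ref{main3} becomes the assertion $\lambda(G) \ge t((C_p)^n) = n(p-1)+1$.

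For $G$ abelian I would prove this by induction on the number of cyclic direct factors: split off the cyclic factor $C_{p^{a_1}}$ of largest order, apply the product estimate, and use that Jennings' formula gives $t(C_{p^{a_i}})-1 = p^{a_i}-1 \ge a_i(p-1)$, with room to spare as soon as $a_i \ge 2$, while a $C_p$ factor contributes exactly $p-1$. The running total can fail to reach $n(p-1)+1$ only when the non-elementary part of $G$ is too small, and there I would fall back on the exact value of $\gn(kC_{p^{a_1}})$, supplemented by $\gn(k(C_2\times C_4)) = 4$ when $p=2$; a short check then shows the inequality holds for every non-elementary-abelian abelian $G$ except $C_4$ and $C_9$. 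For $G$ non-abelian I would combine subgroup monotonicity with the structure theory of $p$-groups — passing to a large abelian subgroup, or to an abelian subgroup of index $p$ when one exists — to fall back on the abelian estimate, invoking the Koshitani--Motose results precisely at the point where one must bound the loss incurred in this reduction and, dually, bound $t(G)$ from below so that the target $n(p-1)+1$ does not overshoot the structural invariants of $G$. Their classification of the $p$-groups whose radical has the smallest nilpotency indices is exactly what picks out the remaining exceptions: the extraspecial and almost extraspecial $2$-groups, the extraspecial groups of exponent $p$ for odd $p$, and $p^{1+2}_-$ for $p\in\{3,5\}$.

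I expect the main obstacle to be the non-abelian case, and within it the groups lying ``between'' extraspecial and elementary abelian: an indecomposable $p$-group of exponent $p$ has neither a large cyclic subgroup (so the cyclic values are useless) nor a direct-product decomposition (so the superadditivity estimate does not apply), and subgroup monotonicity alone loses a term of size $p-1$ with each step down in order, which one cannot afford. Making the bookkeeping tight enough to clear $t((C_p)^n)$ for all such $G$, while isolating exactly the finitely many near-extraspecial families for which it genuinely fails, is the delicate part, and this is where the full strength of the Koshitani--Motose nilpotency-index estimates is needed.
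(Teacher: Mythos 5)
Your toolkit --- subgroup monotonicity plus a product superadditivity estimate --- omits the one ingredient the paper's argument actually runs on: the Christensen--Wang quotient bound of Theorem~\ref{thm:CW-lower}, namely $\gn(kG) \ge t(G/C)$ for any central subgroup $C$ of order~$p$. That inequality is the only mechanism in play that converts the Koshitani--Motose \emph{lower} bounds on nilpotency indices into lower bounds on ghost numbers. With it, the whole case $\abs{\Phi(G)} > p$ is dispatched in three lines (Lemma~\ref{notelab}): pick $C \le \Phi(G)\cap Z(G)$ of order~$p$, observe that $G/C$ is still not elementary abelian, and apply Motose's bound $t(G/C)\ge n(p-1)+1$. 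Without it, that case is out of reach by your methods. A non-abelian indecomposable $G$ admits no product decomposition, need not have an abelian subgroup of index~$p$, and its largest abelian subgroups can be far smaller than $G$ itself, so monotonicity down to an abelian subgroup falls hopelessly short of $n(p-1)+1$. Your proposed remedy --- invoking Koshitani--Motose ``to bound the loss incurred in this reduction'' --- cannot close this, because their theorems bound $t$ from below, and for a \emph{subgroup} $H$ the only available relation is $\gn(kH)\le t(H)-1$, which points the wrong way; a lower bound on $t$ yields a lower bound on a ghost number only through the quotient theorem you never state.

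The gap is already visible in your abelian bookkeeping. For $G=C_3\times C_9$ (order $3^3$, not among the exceptions) the target is $t((C_3)^3)=7$, but splitting off the largest cyclic factor gives $\gn(kC_9)+t(C_3)-1=4+2=6$; the paper instead gets $\gn(kG)\ge t(C_9)=9$ by quotienting out the complementary $C_3$. The same shortfall recurs for $C_9\times(C_3)^{n-2}$ for all $n\ge 3$, so the claimed ``short check'' for abelian groups does not go through as described. (Your superadditivity inequality is also asserted without proof and is not in the paper; the only form of Christensen--Wang's Theorem~4.3 the paper uses is the induced-module special case recorded as Theorem~\ref{thm:CW-lower}.) Where your plan does align with the paper is the residual case $\abs{\Phi(G)}=p$: there Propositions \ref{evenlbound} and~\ref{poddlbound} do combine subgroup monotonicity, explicit Jennings computations (Lemma~\ref{lem:extrasp}), and exactly the exceptional families you list --- but even there the decisive step is again $\gn(kG)\ge t(G/C)$ (or, for $p=2$, its consequence Corollary~\ref{cor:directproduct}), not a subgroup or tensor-product estimate.
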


\end{section}

\begin{section}{The upper bound}

\noindent
Let us recall the ghost number of a cyclic group.

\begin{thm}[\cite{CheboluChristensenMinac:ghosts}, Theorem 5.4.]
\label{cyclic}
%Let $G$ be a cyclic $p$-group of order $p^n$. Then
\[
\gn(kC_{p^n}) = \left\lceil \frac{p^n-1}{2} \right\rceil = \begin{cases} 2^{n-1} &  p=2  \\ \frac{p^n-1}{2}& \text{$p$ odd} \end{cases} \, .
\]
\end{thm}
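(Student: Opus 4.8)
The plan is to use the explicit presentation $kC_{p^n} \cong R := k[x]/(x^q)$, where $q = p^n$ and $x = g - 1$ for a chosen generator $g$ of $C_{p^n}$, and to analyse the ghost maps between the indecomposable $R$-modules $M_i := R/(x^i)$, $1 \le i \le q$, directly. Here $M_q = R$ is free, $\Omega M_i \cong M_{q-i}$, so $\Omega^2 k \cong k$ and $\Omega^m k$ is $M_1$ for $m$ even and $M_{q-1}$ for $m$ odd. A homomorphism $M_i \to M_j$ has the form $\bar 1 \mapsto \bar a$ for some $a \in R$ with $x^i a \in (x^j)$, and composing such maps multiplies the corresponding elements of $R$. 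Writing $v$ for the $x$-adic valuation on $R$ (so $v(0) = q$), the first step is to check two elementary facts about $\operatorname{stmod}(kC_{p^n})$: \emph{(i)} the map $M_i \to M_j$ determined by $a$ is zero in $\operatorname{stmod}(kC_{p^n})$ if and only if $v(a) \ge \min(j, q - i)$ --- indeed the projective homomorphisms $M_i \to M_j$ are exactly those whose image lies in $x^{q-i}M_j$; and \emph{(ii)} $\hat H^m(C_{p^n}, M_i)$ is one-dimensional for every $m$, generated for $m$ even by $M_1 \to M_i$, $\bar 1 \mapsto \overline{x^{i-1}}$, and for $m$ odd by any surjection $M_{q-1} \twoheadrightarrow M_i$.

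Combining \emph{(i)} and \emph{(ii)} with the identification $\hat H^m(C_{p^n},-) = \underline{\operatorname{Hom}}(\Omega^m k,-)$ gives the key lemma: \emph{the map $M_i \to M_j$ determined by $a \in R$ is a ghost if and only if $v(a) \ge \max(1, j - i + 1)$}, the bound $v(a) \ge j - i + 1$ recording the vanishing on even-degree Tate cohomology and $v(a) \ge 1$ the vanishing on odd degrees. For the lower bound, pick $m$ with $\min(m, q - m) = \lfloor q/2 \rfloor$ and let $\nu \colon M_m \to M_m$ be multiplication by $x$. By the lemma $\nu$ is a ghost, and by \emph{(i)} the map $\nu^j$ (which corresponds to $x^j$) is zero in $\operatorname{stmod}(kC_{p^n})$ precisely when $j \ge \lfloor q/2 \rfloor$. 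Hence $\nu^{\lfloor q/2 \rfloor - 1}$ is a nonzero composite of $\lfloor q/2 \rfloor - 1$ ghosts, so $\gn(kC_{p^n}) \ge \lfloor q/2 \rfloor = \lceil (q-1)/2 \rceil$.

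For the upper bound I would show that every $\lfloor q/2 \rfloor$-fold composite of ghosts vanishes. It is enough to treat a chain $M_{i_0} \xrightarrow{f_1} M_{i_1} \to \cdots \xrightarrow{f_s} M_{i_s}$ of ghosts with $f_\ell$ corresponding to $a_\ell \in R$: the composite corresponds to $a_1 \cdots a_s$ and, by \emph{(i)}, is nonzero in $\operatorname{stmod}(kC_{p^n})$ exactly when $S := \sum_{\ell=1}^s v(a_\ell) < \min(i_s, q - i_0)$. From $v(a_\ell) \ge 1$ we get $S \ge s$, so $s \le S < q - i_0$; from $v(a_\ell) \ge i_\ell - i_{\ell-1} + 1$ we get $S \ge (i_s - i_0) + s$ by telescoping, and combining with $S < i_s$ gives $s < i_0$. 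Hence $s < \min(i_0, q - i_0) \le \lfloor q/2 \rfloor$, so a nonzero composite of ghosts between indecomposables has length at most $\lfloor q/2 \rfloor - 1$. Since every finitely generated $kC_{p^n}$-module is a direct sum of $M_i$'s and the entries of a matrix of ghosts are again ghosts (they form an ideal), every $\lfloor q/2 \rfloor$-fold composite of ghosts between arbitrary modules is zero. Together with the lower bound this yields $\gn(kC_{p^n}) = \lfloor q/2 \rfloor = \lceil (q-1)/2 \rceil$, as claimed; the case split between $p = 2$ and $p$ odd is just the evaluation of $\lfloor p^n/2 \rfloor$.

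The hard part will be establishing facts \emph{(i)} and \emph{(ii)} cleanly --- that is, correctly identifying which honest homomorphisms die in the stable category and which become ghosts --- in particular computing the image $x^{q-i}M_j$ of the projective homomorphisms $M_i \to M_j$ and pinning down a generator of the odd-degree Tate cohomology as a genuine surjection. Once that is in hand, the valuation counts in the lower and upper bounds are purely mechanical, and the passage from indecomposable modules to arbitrary finitely generated modules is routine, using Krull--Schmidt and the fact that ghosts form an ideal.
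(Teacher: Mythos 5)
This statement is quoted from Theorem~5.4 of Chebolu--Christensen--Min\'{a}\v{c} and the paper offers no proof of its own, so there is nothing internal to compare against; what you have written is a self-contained derivation, and it is correct. Your two preliminary facts do hold: for \emph{(i)}, a map $M_i \to M_j$ factors through a projective iff it factors through the surjection $R \twoheadrightarrow M_j$, and since $\Hom(M_i,R)$ consists of the maps $\bar 1 \mapsto b$ with $v(b) \geq q-i$, the stably trivial maps are exactly those with $v(a) \geq \min(j,q-i)$ as you claim; for \emph{(ii)}, combining this with the well-definedness constraint $v(a) \geq j-i$ gives one-dimensional stable Hom groups $\underline{\Hom}(M_1,M_i)$ and $\underline{\Hom}(M_{q-1},M_i)$ with the stated generators. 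The ghost criterion $v(a) \geq \max(1,\, j-i+1)$ then follows by postcomposing with those generators, and the two valuation counts (the telescoping sum for the upper bound, the powers of multiplication by $x$ on $M_{\lfloor q/2\rfloor}$ for the lower bound) are exactly right; the reduction from arbitrary modules to indecomposables via Krull--Schmidt and the ideal property is also sound. This is essentially the computation carried out in the cited source, phrased in terms of the $x$-adic valuation on $k[x]/(x^q)$; the only points needing care in a written-up version are the corner cases $j=q$ (projective target, where everything is stably zero anyway) and the convention $v(0)=q$ when a product of the $a_\ell$ vanishes in $R$, both of which you have implicitly handled correctly.
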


\begin{proof}[Proof of Theorem \ref{main1}\,(\enref{main-1})]
Let $G$ be any $p$-group of order~$p^n$.
Theorem~\ref{upperbound} tells us that
\[
\gn(kG) \leq t(G)-1 \, .
\]
Motose and Ninomiya \cite[Thm 1]{MotoseNinomiya:nilpotencyIndex} demonstrated that if $t(G) = \abs{G}$ then $G$ is cyclic; and Koshitani \cite[Thm 1.6]{Koshitani:nilpotencyIndices} showed that if $n \geq 2$ then the following three statements are equivalent:
\begin{enumerate}
\item $t(G) = p^{n-1}+p-1$
\item $p^{n-1} < t(G) < p^n$
\item $G$ is not cyclic, but it does have a cyclic subgroup of index~$p$. 
\end{enumerate}
If $p=2$ and $G$ is not cyclic then by Koshitani's result and Theorem~\ref{cyclic}
\[
t(G) -1 \leq 2^{n-1} = \gn(kC_{p^n}) \, .
\]
If $p$ is odd and $G$ not cyclic, then $t(G)-1 \leq p^{n-1}+p-2$. This is strictly smaller than $\gn(kC_{p^n}) = \frac{p^n-1}{2}$, except for the one case $p^n=3^2$. But the cyclic group of order $9$ has ghost number~$4$, whereas $C_3 \times C_3$ has ghost number~$3$ by \cite[Thm 1.1]{ChristensenWang:ghostNumbers}\@.
\end{proof}

\begin{proof}[Proof of Proposition~\ref{pro:upperBoundPlus}]
Inspecting the proof of Theorem \ref{main1}\,(\enref{main-1}) we see that $p=2$, and that $G$ has a cyclic subgroup of index~$p$. By the classification of such groups (see e.g.\@ \cite[23.4]{Aschbacher:book}) it follows that $G$ is either $D_{2^n}$ or one of the stated groups. But $D_{2^n}$ has ghost number $2^{n-2}+1$ by \cite[Cor 1.1]{ChristensenWang:ghostNumbers}\@.
\end{proof}

\end{section}

\begin{section}{Nilpotency index and a lower bound}

\noindent
The following result is a special case of \cite[Thm 4.3]{ChristensenWang:ghostNumbers}:

\begin{thm}[Christensen--Wang]
\label{thm:CW-lower}
Let $G$ be a finite $p$-group and $k$ a field of characteristic~$p$. Suppose that $C \leq Z(G)$ is cyclic of order~$p$. Then
\[
\gn(kG) \geq t(G/C) \, .
\]
\end{thm}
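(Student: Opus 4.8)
The plan is to exhibit inside $\operatorname{stmod}(kG)$ a composite of $t(G/C)-1$ ghost maps which is nonzero; since $\gn(kG)$ is the nilpotency degree of the ghost ideal, this yields $\gn(kG)\geq t(G/C)$. Write $\bar G=G/C$ and $t=t(\bar G)$, so $J(k\bar G)^{\,t-1}\neq 0$. The module used throughout is
\[
V \;=\; \operatorname{Ind}_C^G k \;=\; kG\otimes_{kC}k \;\cong\; \operatorname{infl}_{\bar G}^G(k\bar G),
\]
the permutation module $k[G/C]$. Because $C$ is central, $\operatorname{Res}_C V$ is a trivial $kC$-module and $\operatorname{End}_{kG}(V)\cong k\bar G$, acting on $V=k[G/C]$ by right translation; for $\bar y\in k\bar G$ write $\phi_{\bar y}\colon V\to V$ for the corresponding endomorphism, so that $\phi_{\bar y}\circ\phi_{\bar y'}=\phi_{\bar y'\bar y}$.

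The first step is to show that $\phi_{\bar y}$ is a ghost whenever $\bar y\in J(k\bar G)$. As $J(k\bar G)$ is the augmentation ideal, spanned by the $\phi_{\bar g}-\mathrm{id}$ with $\bar g\in\bar G$, it suffices to prove that each $\phi_{\bar g}$ induces the identity on $\hat{H}^*(G,V)$. Here I would run the Lyndon--Hochschild--Serre spectral sequence of the central extension $1\to C\to G\to\bar G\to1$ with coefficients in $V$. Since $\operatorname{Res}_C V\cong k^{|\bar G|}$ is trivial and $C$ is central, $H^q(C,\operatorname{Res}_C V)\cong H^q(C,k)\otimes k[\bar G]$ with $\bar G$ acting by left translation on $k[\bar G]$; this is a free $k\bar G$-module (with coefficients $H^q(C,k)$), so $E_2^{p,q}=0$ for $p>0$ and $E_2^{0,q}\cong\bigl(H^q(C,k)\otimes k[\bar G]\bigr)^{\bar G}\cong H^q(C,k)$ — the sequence collapses to Shapiro's isomorphism. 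The endomorphism $\phi_{\bar g}$ acts on $H^q(C,\operatorname{Res}_C V)$ as $\mathrm{id}\otimes R_{\bar g}$ (right translation), which commutes with the $\bar G$-action and fixes the all-ones vector spanning $k[\bar G]^{\bar G}$; hence $\phi_{\bar g}$ is the identity on $E_2^{0,q}$, and therefore on $\hat{H}^n(G,V)=H^n(G,V)$ for all $n\geq0$ (note $\operatorname{Norm}_G$ kills $V$). The analogous statement in negative degrees follows by Tate duality, using that the permutation module $V$ is self-dual. Thus $\phi_{\bar y}$ is a ghost for every $\bar y\in J(k\bar G)$.

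The second step is to arrange a nonzero composite. Choose $\bar y_1,\dots,\bar y_{t-1}\in J(k\bar G)$ with $\bar x:=\bar y_{t-1}\cdots\bar y_1\neq0$, which is possible since $J(k\bar G)^{\,t-1}\neq0$; then $\phi_{\bar y_1}\circ\cdots\circ\phi_{\bar y_{t-1}}=\phi_{\bar x}$. To see $\phi_{\bar x}\neq0$ in $\operatorname{stmod}(kG)$, restrict to $C$: a factorization $V\xrightarrow{\alpha}P\xrightarrow{\beta}V$ with $P$ free over $kG$ restricts to $kC$-maps with $\operatorname{Res}_C P$ free; since $\operatorname{Res}_C V$ is trivial, $\operatorname{Res}_C\alpha$ lands in $\operatorname{soc}(\operatorname{Res}_C P)\subseteq J(kC)\operatorname{Res}_C P$ while $\operatorname{Res}_C\beta$ annihilates $J(kC)\operatorname{Res}_C P$, forcing $\operatorname{Res}_C(\phi_{\bar x})=0$; but $\operatorname{Res}_C(\phi_{\bar x})$ is right multiplication by $\bar x\neq0$ on $k[\bar G]$, a nonzero linear map. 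So $\phi_{\bar x}\neq0$, and we have a nonzero composite of $t-1$ ghosts, whence $\gn(kG)\geq t=t(G/C)$.

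I expect the main obstacle to be the first step: not that $\phi_{\bar y}$ kills $\hat{H}^0$ — which is essentially the observation that it annihilates the all-ones class — but that it kills \emph{all} of Tate cohomology, which forces one to control the $\operatorname{End}_{kG}(V)$-action on $\hat{H}^*(G,V)$ in every degree. The spectral sequence argument is transparent in non-negative degrees, and the reduction of the negative range to it (via self-duality of $V$) is the part that needs the most care; the remaining ingredients — identifying $V$, composing the endomorphisms, and the restriction argument for non-triviality — are routine.
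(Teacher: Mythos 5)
Your proof is correct, but it takes a genuinely different route from the paper. The paper's proof is essentially a one-line specialization: it cites \cite[Thm 4.3]{ChristensenWang:ghostNumbers} with $M_n$ the trivial $kC$-module, observes that the induced module $k(G/C)$ then has ghost length equal to its radical length $t(G/C)$, and concludes since the ghost number dominates every ghost length. You instead reprove the needed special case from scratch: you identify $\operatorname{End}_{kG}(k[G/C])$ with $k\bar G$ acting by right translation, show via the collapsing Lyndon--Hochschild--Serre spectral sequence (plus Tate duality for negative degrees) that each $\phi_{\bar y}$ with $\bar y \in J(k\bar G)$ is a ghost, and then use the restriction-to-$C$ argument to see that $\phi_{\bar x}$ for $0 \neq \bar x \in J(k\bar G)^{t-1}$ does not factor through a projective. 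All the steps check out: the norm map does vanish on $k[G/C]$ in characteristic $p$, the $E_2$-page is concentrated in the column $p=0$ because $H^q(C,\operatorname{Res}_C V)$ is $k\bar G$-free, and in the non-triviality argument $\operatorname{Res}_C\alpha$ lands in $P^C = J(kC)^{p-1}P \subseteq J(kC)P$ while $\operatorname{Res}_C\beta$ kills $J(kC)P$ because $J(kC)$ annihilates the trivial module $\operatorname{Res}_C V$. What your approach buys is self-containedness -- you have effectively reproved the relevant case of Christensen--Wang's theorem, and your argument makes visible exactly why centrality of $C$ is needed (both for $\operatorname{End}_{kG}(V)\cong k\bar G$ and for the triviality of the conjugation action on $\hat H^*(C,k)$); what the paper's citation buys is brevity and access to the more general statement about arbitrary $kC$-modules $M_n$.
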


\begin{proof}
In \cite[Thm 4.3]{ChristensenWang:ghostNumbers}, take $M_n$ to be the trivial $kC$-module. Then the induced $kG$-module $k(G/C)$ has ghost length equal to its radical length. But its radical length is $t(G/C)$, and by definition the ghost number is the largest ghost length.
\end{proof}

\noindent One immediate corollary generalizes \cite[Corollary~5.12]{CheboluChristensenMinac:ghosts}:

\begin{cor}
\label{cor:directproduct}
Let $H$ be a $2$-group and $G=H\times (C_2)^r$ for $r\geq 1$.
Then
\[
\gn(kG) = t(G) - 1 = t(H)+r-1 \, .
\]
\end{cor}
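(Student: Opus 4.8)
The plan is to sandwich $\gn(kG)$ between $t(G)-1$ from above and $t(G)-1$ from below. The upper bound $\gn(kG)\le t(G)-1$ is immediate from Theorem~\ref{upperbound}. For the lower bound I would invoke Theorem~\ref{thm:CW-lower}: since $r\ge 1$ and $G=H\times(C_2)^r$, one of the $C_2$ direct factors is a central subgroup $C\le Z(G)$ of order~$p=2$, and $G/C$ is visibly isomorphic to $H\times(C_2)^{r-1}$. Hence $\gn(kG)\ge t\bigl(H\times(C_2)^{r-1}\bigr)$, and the corollary will follow once the relevant nilpotency indices are computed.

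The computational input is the classical identity $t(G_1\times G_2)=t(G_1)+t(G_2)-1$ for $p$-groups $G_1,G_2$ over a field of characteristic~$p$. I would justify it as follows: $k(G_1\times G_2)=kG_1\otimes_k kG_2$; for a $p$-group the radical $J(kG_i)$ is the augmentation ideal $I_i$, with $kG_i/I_i\cong k$; consequently $J(kG_1\otimes_k kG_2)=I_1\otimes kG_2+kG_1\otimes I_2$, because the quotient by this ideal is the field~$k$ and the ideal is nilpotent. Since
\[
\bigl(I_1\otimes kG_2+kG_1\otimes I_2\bigr)^m=\sum_{i+j=m} I_1^{\,i}\otimes I_2^{\,j},
\]
the least $m$ annihilating every summand (i.e.\ with $i\ge t(G_1)$ or $j\ge t(G_2)$ whenever $i+j=m$) is $m=t(G_1)+t(G_2)-1$. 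Because $t(C_2)=2$, iterating gives $t((C_2)^s)=s+1$ and $t\bigl(H\times(C_2)^s\bigr)=t(H)+s$ for every $s\ge 0$. In particular $t(G)=t(H)+r$ and $t\bigl(H\times(C_2)^{r-1}\bigr)=t(H)+r-1=t(G)-1$, so the lower bound becomes $\gn(kG)\ge t(G)-1$; combined with the upper bound this yields $\gn(kG)=t(G)-1=t(H)+r-1$.

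There is no serious obstacle. The only points needing care are citing (or, as above, supplying in one line) the tensor-product formula for~$t$, and verifying the hypothesis of Theorem~\ref{thm:CW-lower} — namely that the chosen $C_2$ really is central in $G$, which is clear for a direct factor. If one wishes to avoid the general identity, it is enough to prove the special case $t(A\times C_2)=t(A)+1$ for a $p$-group~$A$ and iterate it $r$ times, starting from $H$.
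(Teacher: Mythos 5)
Your proof is correct and follows the same sandwich as the paper: the upper bound is Theorem~\ref{upperbound}, and the lower bound comes from Theorem~\ref{thm:CW-lower} applied to a central $C_2$ direct factor $C$, using $G/C \cong H\times(C_2)^{r-1}$ and $t(G/C)=t(G)-1$. The only point of divergence is how the identity $t(H\times(C_2)^r)=t(H)+r$ is obtained: the paper reads it off from the Jennings series of $G$ via Jennings' Theorem, whereas you derive the general formula $t(G_1\times G_2)=t(G_1)+t(G_2)-1$ directly from $J(kG_1\otimes_k kG_2)=J(kG_1)\otimes kG_2+kG_1\otimes J(kG_2)$; your justification of this (the quotient is $k$ and the ideal is nilpotent, and the term $I_1^{\,t(G_1)-1}\otimes I_2^{\,t(G_2)-1}$ is nonzero) is sound, and is if anything more elementary and self-contained than the citation to Jennings' Theorem.
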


\begin{proof}
The  Jennings series of $G$ is given by
\[
\Gamma_s(G) = \begin{cases} \Gamma_1(H) \times C_2^r & s=1 \\ \Gamma_s(H) & \text{otherwise} \end{cases} \, .
\]
By Jennings' Theorem (Theorem 3.14.6 in~\cite{Benson:I}) it follows that $t(G) = t(H)+r$. For the first inequality it suffices to consider the case $r=1$; so $G = H \times C$ with $C \cong C_2$. Theorem~\ref{thm:CW-lower} tells us that
\[
\gn(kG) \geq t(G/C) = t(H) = t(G)-1 \, .
\]
Now apply Theorem \ref{upperbound}\@.
\end{proof}

\end{section}

\begin{section}{Proposition~\ref{main3}: the (almost) extraspecial case}

\noindent
Recall that a $p$-group $G$ is \emph{extraspecial} if $\Phi(G) $, $[G,G]$ and $Z(G)$ coincide and have order~$p$; and \emph{almost} extraspecial if $\Phi(G) = [G,G]$ has order~$p$, but $Z(G)$ is cyclic of order~$p^2$. That is, an almost extraspecial group is a central product of the form $H * C_{p^2}$, with $H$ extraspecial.
The following lemma is presumably well known.

\begin{lem}
\label{lem:extrasp}
Suppose that $G$ is a $p$-group of order $p^n$ whose Frattini subgroup has order~$p$. Then
\[
t(G) = \begin{cases} (n+1)(p-1)+1 & \text{if $G$ has exponent~$p$} \\ (p+n-1)(p-1) + 1 & \text{if $G$ has exponent $p^2$} \end{cases} \, .
\]
In particular, if $p=2$ then $t(G) = n+2$.
\end{lem}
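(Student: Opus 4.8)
The plan is to read off $t(G)$ from Jennings' Theorem (Theorem~3.14.6 in~\cite{Benson:I}), which here is almost immediate because the relevant filtration of~$G$ is so short. Recall that the Jennings (Brauer) series is defined by $\kappa_1(G) = G$ and $\kappa_m(G) = [\kappa_{m-1}(G),G]\cdot \kappa_{\lceil m/p\rceil}(G)^p$, and that Jennings' Theorem gives $t(G) = 1 + (p-1)\sum_{m\ge 1} m\, d_m$, where $d_m = \dim_{\mathbb{F}_p}\bigl(\kappa_m(G)/\kappa_{m+1}(G)\bigr)$. Since $\Phi(G) = [G,G]\,G^p$ has order~$p$, it is a normal subgroup of order~$p$ in a $p$-group, hence central, and of exponent~$p$. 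Moreover $\kappa_2(G) = [G,G]\,\kappa_1(G)^p = [G,G]\,G^p = \Phi(G)$, so $d_1 = \dim(G/\Phi(G)) = n-1$. Finally, once $\kappa_{m-1}(G) \le \Phi(G)$ we have $[\kappa_{m-1}(G),G] = 1$ (as $\Phi(G)$ is central), and once $\kappa_j(G) \le \Phi(G)$ we have $\kappa_j(G)^p = 1$ (as $\Phi(G)$ has exponent~$p$); the only $p$-th power that can be nontrivial is $\kappa_1(G)^p = G^p$. So everything reduces to tracking when the series, which starts $G \ge \Phi(G) \ge \cdots$, drops from $\Phi(G)$ to~$1$.

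First suppose $G$ has exponent~$p$. Then $p$ must be odd, since for $p=2$ a group of exponent~$2$ is elementary abelian and has trivial Frattini subgroup. Hence $G^p = 1$ and $\lceil 3/p\rceil = 1$, so $\kappa_3(G) = [\Phi(G),G]\cdot G^p = 1$. Thus the only nonzero layers are $d_1 = n-1$ and $d_2 = 1$, we get $\sum_m m\,d_m = (n-1)+2 = n+1$, and therefore $t(G) = (n+1)(p-1)+1$.

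Now suppose $G$ has exponent~$p^2$. Then $G^p$ is a nontrivial subgroup of $\Phi(G)$, so $G^p = \Phi(G)$. A short induction, using $\lceil m/p\rceil = 1$ whenever $m \le p$, shows $\kappa_m(G) = [\Phi(G),G]\cdot G^p = \Phi(G)$ for all $2 \le m \le p$; and then $\kappa_{p+1}(G) = [\Phi(G),G]\cdot\kappa_2(G)^p = 1$. Hence the nonzero layers are $d_1 = n-1$ and $d_p = 1$, so $\sum_m m\,d_m = (n-1)+p$, and $t(G) = (p+n-1)(p-1)+1$. When $p=2$ this is the only possibility for the exponent --- an element of order~$8$ would force $|G^2| \ge 4$ --- so the formula always specializes to $t(G) = n+2$.

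Apart from the standard fact that a normal subgroup of order~$p$ in a $p$-group is central, the only points needing care are the bookkeeping with the ceiling function $\lceil m/p\rceil$, which pins down the single step at which the Jennings series collapses from $\Phi(G)$ to the trivial group, and checking that the two ``small'' configurations ($p = 2$, and $p$ odd of exponent~$p$) fit the same pattern. I do not anticipate any genuine obstacle.
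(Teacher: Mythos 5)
Your proof is correct and follows essentially the same route as the paper: both compute the Jennings series (noting $\kappa_1=G$, $\kappa_2=\Phi(G)$, and that the exponent determines whether the series terminates at step $3$ or step $p+1$) and then read off $t(G)$ from Jennings' Theorem. Your version simply spells out the recursion and the dimension bookkeeping in more detail than the paper does.
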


\begin{proof}
Consider the Jennings series $\Gamma_r(G)$. We certainly have $\Gamma_1(G) = G$ and $\Gamma_2(G) = \Phi(G)$.
If the exponent is~$p$ then $\Gamma_3(G) = 1$, whereas if the exponent is~$p^2$ then $\Gamma_p(G) = \Gamma_2(G)$ and $\Gamma_{p+1}(G)=1$. The result follows by Jennings' Theorem (Theorem 3.14.6 in~\cite{Benson:I})\@.
\end{proof}

\noindent
Recall from \cite[Thm 1]{MotoseNinomiya:nilpotencyIndex} that $t(C_p^n) = n(p-1)+1$.

\begin{pro}
\label{evenlbound}
Let $G$ be a $2$-group of order $2^n$ whose Frattini subgroup has order~$2$. If $G$ is neither $C_4$ nor extraspecial nor almost extraspecial then
\[
\gn(kG) = n+1 = t(C_2^n) \, .
\]
\end{pro}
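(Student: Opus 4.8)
The plan is to sandwich $\gn(kG)$ between two copies of $n+1$. Since $G$ has Frattini subgroup of order~$2$, Lemma~\ref{lem:extrasp} gives $t(G) = n+2$, so Theorem~\ref{upperbound} yields the upper bound $\gn(kG) \le t(G)-1 = n+1$; and $t(C_2^n) = n(p-1)+1 = n+1$ for $p=2$. So it remains to prove the matching lower bound $\gn(kG) \ge n+1$.

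For the lower bound I would apply Theorem~\ref{thm:CW-lower} with a well-chosen central subgroup $C \le Z(G)$ of order~$2$, reducing matters to the claim $t(G/C) = n+1$. Now $\abs{G/C} = 2^{n-1}$, and for $p$-groups $\Phi(G/C) = \Phi(G)C/C$; hence if $C \ne \Phi(G)$, so that $\Phi(G) \cap C = 1$, then $\Phi(G/C)$ again has order~$2$, and Lemma~\ref{lem:extrasp} applied to $G/C$ gives $t(G/C) = (n-1)+2 = n+1$. So the whole proof comes down to producing a central subgroup of order~$2$ distinct from $\Phi(G)$ --- equivalently, to showing that $Z(G)$ is \emph{not} cyclic.

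It therefore remains to analyse the $2$-groups $G$ with $\abs{\Phi(G)} = 2$ and cyclic centre, and to check that these are precisely $C_4$ and the (almost) extraspecial $2$-groups; this classification is the heart of the argument. The Frattini subgroup, being normal of order~$2$, is central, and since $G/\Phi(G)$ is elementary abelian the square of every element of $G$ lies in $\Phi(G)$; in particular the square of every central element lies in $\Phi(G)$, so a cyclic centre has order at most~$4$. If $Z(G) = \Phi(G)$ has order~$2$, then $G$ is non-abelian (otherwise $G = \Phi(G) = C_2$, against $\abs{\Phi(G)} = 2$), so $1 \ne [G,G] \le \Phi(G)$ forces $[G,G] = \Phi(G) = Z(G)$ and $G$ is extraspecial. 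If $Z(G) \cong C_4$, then $G$ is either abelian, hence $G = Z(G) = C_4$, or non-abelian, in which case $[G,G] = \Phi(G)$ has order~$2$ while $Z(G)$ is cyclic of order~$4$, i.e.\ $G$ is almost extraspecial. Consequently, if $G$ is none of $C_4$, extraspecial, or almost extraspecial, then $Z(G)$ is non-cyclic, its $2$-torsion subgroup $\Omega_1(Z(G))$ is elementary abelian of rank $\ge 2$ and hence properly contains $\Phi(G)$, and any order-$2$ subgroup $C \le \Omega_1(Z(G))$ with $C \ne \Phi(G)$ supplies the central subgroup needed above. The one real obstacle is this structural classification; the rest is a direct application of the results already in hand.
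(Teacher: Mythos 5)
Your proof is correct, and it follows the same underlying idea as the paper's, but packaged differently. The paper asserts (without proof) that the hypotheses force a splitting $G = H \times C$ with $C \cong C_2$ and $\abs{\Phi(H)} = 2$, and then invokes Corollary~\ref{cor:directproduct} to get $\gn(kG) = t(H)$ and Lemma~\ref{lem:extrasp} to get $t(H) = n+1$. You instead inline the content of Corollary~\ref{cor:directproduct}: you get the upper bound from Lemma~\ref{lem:extrasp} applied to $G$ itself together with Theorem~\ref{upperbound}, and the lower bound from Theorem~\ref{thm:CW-lower} applied to a central $C$ of order $2$ with $C \cap \Phi(G) = 1$, using $\Phi(G/C) = \Phi(G)C/C$ and Lemma~\ref{lem:extrasp} for $G/C$. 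This is marginally more economical, since you never need the actual direct-product decomposition, only the existence of a central involution outside $\Phi(G)$. Your main added value is that you actually prove the structural step the paper leaves implicit --- that $\abs{\Phi(G)}=2$ with $Z(G)$ cyclic forces $G$ to be $C_4$, extraspecial, or almost extraspecial --- and that argument (Frattini subgroup central, squares of central elements in $\Phi(G)$, hence $\abs{Z(G)} \le 4$, then the two cases) is sound.
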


\begin{proof}
By assumption, $G$ has the form $G = H \times C$ with $C \cong C_2$, and $\Phi(H)$ cyclic of order~$2$. So $\gn(kG) = t(H)$ by Corollary~\ref{cor:directproduct}, and $t(H) = n+1$ by Lemma~\ref{lem:extrasp}\@.
\end{proof}

\begin{pro}\label{poddlbound}
Let $p$ be an odd prime and $G$ be a $p$-group of order $p^n$ whose Frattini subgroup has order~$p$. Then
\[
\gn(kG) \geq n(p-1)+1 = t(C_p^n)
\]
except possibly in the following cases:
\begin{itemize}
\item
$G$ is extraspecial of exponent~$p$, for any odd~$p$;
\item
$G$ is extraspecial of order~$p^3$ and exponent $p^2$ for $p \in \{3,5\}$.
\item $G = C_9$, with ghost number $4$ and $t(C_3 \times C_3) = 5$.
\end{itemize}
\end{pro}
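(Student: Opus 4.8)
The plan is to reduce, as far as possible, to the case where $G$ is a direct product $H \times C$ with $C \cong C_p$ central, and then invoke Theorem~\ref{thm:CW-lower} and Lemma~\ref{lem:extrasp} exactly as in the even case (Proposition~\ref{evenlbound}). So the first step is to classify $p$-groups of order $p^n$ with $|\Phi(G)| = p$. Since $\Phi(G) = G'[G,G]$-style considerations give $G/\Phi(G) = (C_p)^{n-1}$, and $\Phi(G) = [G,G]G^p$ has order~$p$, the commutator subgroup and the subgroup generated by $p$th powers are each contained in a group of order~$p$. I would split into cases according to the exponent of~$G$ (either $p$ or $p^2$, since $\Phi(G)$ has order~$p$) and according to whether $G'$ is trivial or has order~$p$.

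If $G$ is abelian, then $|\Phi(G)| = p$ forces $G \cong (C_p)^{n-1} \times C_{p^2}$ (exponent $p^2$) or — only when $n = 1$ — $G = C_p$; but for $n \geq 2$ the group $(C_p)^{n-1} \times C_{p^2}$ is a direct product with a central $C_p$ factor as soon as $n \geq 3$, and the remaining small case $n=2$ is exactly $C_9$ (for $p=3$) or $C_{p^2}$ with $p \geq 5$, which I must handle separately — note $C_{p^2}$ for $p\geq 5$ has $\gn = (p^2-1)/2 \geq (p^2+p-2)/2 \cdot \ldots$; in fact one checks directly from Theorem~\ref{cyclic} that $\gn(kC_{p^2}) = (p^2-1)/2 \geq 2(p-1)+1 = t((C_p)^2)$ precisely when $p \geq 5$, leaving $C_9$ as the stated exception. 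If $G$ is nonabelian with $|G'| = p$, then $G' = Z(G)$ or $G' \subsetneq Z(G)$; writing $Z = Z(G)$, the group $G$ is extraspecial exactly when $Z = G'$ has order~$p$, and otherwise $G = H * C_{p^2}$ (almost extraspecial) or $G = H \times C_p$ with $H$ (almost) extraspecial, or more generally $G$ contains a central $C_p$ that splits off as a direct factor. The key combinatorial point is: if $Z(G)$ is not contained in $G' \cdot \{x : x^p = 1\}$ in the "tight" way, then $G$ has a central subgroup $C \cong C_p$ with $C \cap \Phi(G) = 1$, hence $G = H \times C$; and then $\gn(kG) \geq t(G/C) = t(H)$ by Theorem~\ref{thm:CW-lower}, while $t(H) = (n-1+1)(p-1)+1 = n(p-1)+1$ if $H$ has exponent~$p$, or $t(H) = (p+n-2)(p-1)+1 \geq n(p-1)+1$ (since $p \geq 2$) if $H$ has exponent~$p^2$, by Lemma~\ref{lem:extrasp} applied to the group $H$ of order $p^{n-1}$. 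Either way the bound $t((C_p)^n) = n(p-1)+1$ is met.

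The main obstacle is verifying the dichotomy "$G$ splits off a central $C_p$ direct factor, OR $G$ is (almost) extraspecial or one of the listed exceptions." Concretely: a nonabelian $p$-group with $|\Phi(G)| = p$ that is \emph{not} a nontrivial direct product must have $Z(G)$ cyclic (so $Z(G) \cong C_p$ or $C_{p^2}$, as $Z(G) \leq C_G(G) $ and $Z(G)^p \leq \Phi(G)$ forces $|Z(G)| \leq p^2$) — and $Z(G) \cong C_p$ gives extraspecial, $Z(G) \cong C_{p^2}$ gives almost extraspecial (a central product $H * C_{p^2}$). This is the classification of \emph{special} groups with centre of rank one, and I would cite it rather than reprove it. Once a direct $C_p$ factor is available, the argument above goes through uniformly; when it is not, $G$ is extraspecial or almost extraspecial, and there I do \emph{not} claim the bound in general — but the almost extraspecial case for $p$ odd still needs attention, since it is \emph{not} on the exclusion list: here $G = H * C_{p^2}$ with $H$ extraspecial of order $p^{n-1}$, and I would need a separate argument (perhaps again via Theorem~\ref{thm:CW-lower}, taking $C$ to be the $C_p$ inside the $C_{p^2}$ factor so that $G/C \cong H \times C_p$, whence $\gn(kG) \geq t(H \times C_p) = t(H) + (p-1)$, and then bounding $t(H)$ from below by Lemma~\ref{lem:extrasp}) — this quotient-and-Jennings computation is the part I expect to be fiddly, as one must track whether $H$ has exponent $p$ or $p^2$ and check the arithmetic $t(H) + (p-1) \geq n(p-1)+1$ in each subcase.
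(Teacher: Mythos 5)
There is a genuine gap, concentrated in exactly the cases you flag as ``fiddly'' --- and in some you do not flag at all. First, the almost extraspecial case: if $G = H * C_{p^2}$ with $H$ extraspecial, then $Z(G)\cong C_{p^2}$ is cyclic, so its \emph{unique} subgroup $C$ of order $p$ is $\Omega_1(Z(G)) = \Phi(G)$ itself. Hence $G/C = G/\Phi(G) \cong (C_p)^{n-1}$ is elementary abelian, not $H\times C_p$ as you assert, and Theorem~\ref{thm:CW-lower} only yields $\gn(kG)\ge t((C_p)^{n-1}) = (n-1)(p-1)+1$, which falls short of the target by $p-1$. The paper's proof gets around this by passing to a \emph{subgroup} rather than a quotient: it locates a maximal subgroup $L = HZ(G)\times C_p$ of $G$ with $HZ(G)$ of order $p^{n-2}$ and exponent $p^2$, and combines Theorem~\ref{thm:CW-lower} with the monotonicity of ghost number under subgroups (\cite[Prop.~4.9]{CheboluChristensenMinac:ghosts}, quoted as Remark~\ref{rem:subgp}) to get $\gn(kG)\ge \gn(kL)\ge t(HZ(G)) = (p+n-3)(p-1)+1 \ge n(p-1)+1$ for $p\ge 3$. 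That subgroup-monotonicity tool never appears in your proposal, and without it (or some substitute) the almost extraspecial case is not covered.

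Second, your dichotomy leaves the extraspecial groups of exponent $p^2$ entirely unhandled, but only two families of them are on the exclusion list: exponent-$p$ extraspecials, and order-$p^3$ exponent-$p^2$ extraspecials for $p\in\{3,5\}$. You must still prove the bound for extraspecial $G$ of exponent $p^2$ and order $p^{2r+1}$ with $r\ge 2$ (the paper finds a maximal subgroup $H\times C_p$ with $H$ of exponent $p^2$ and runs the same subgroup argument), and for $G\cong p^{1+2}_-$ with $p\ge 7$ (the paper uses the subgroup $C_{p^2}\le G$ and $\gn(kC_{p^2}) = \frac{p^2-1}{2} > 3p-2$). Your reduction to a direct central $C_p$ factor, the abelian analysis isolating $C_9$, and the arithmetic with Lemma~\ref{lem:extrasp} all match the paper's Step~1 and are fine; the missing ingredient throughout is the use of subgroups via Remark~\ref{rem:subgp} in the cases where no suitable central quotient exists.
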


\begin{rem}
\label{rem:subgp}
In the proof we use the Proposition~4.9 from \cite{CheboluChristensenMinac:ghosts}: If $H$ is a subgroup of a finite $p$-group~$G$, then
\[
\gn(kH) \leq \gn(kG) \, .
\]
\end{rem}

\begin{proof}
By assumption we have $\Phi(G) \leq \Omega_1(Z(G))$. Since $\Phi(G) \neq 1$ we have $n \geq 2$.
\item \textbf{Step 1}: Reduction to the case $\Phi(G) = [G,G] = \Omega_1(Z(G))$. \\
If $\Phi(G) \lneq \Omega_1(Z(G))$ then there is $C \leq Z(G)$ with $\abs{C} = p$ and $C \cap \Phi(G) = 1$, hence $\abs{\Phi(G/C)} = p$ and so by Theorem~\ref{thm:CW-lower} and Lemma~\ref{lem:extrasp}
\[
\gn(G) \geq t(G/C) \geq n(p-1) +1 \, .
\]
So we may assume that $\Phi(G) = \Omega_1(Z(G))$. If $[G,G] \neq \Phi(G)$ then $G$ is abelian; and therefore cyclic of order~$p^2$, since $\Omega_1(G) = \Phi(G)$. By Theorem~\ref{cyclic} the ghost number is $\frac{p^2-1}{2}$; for $p > 3$ this is at least $2p-1$.
\item \textbf{Step 2}: Reduction to the case $G$ extraspecial. \\
Extraspecial means that $\Phi(G) = Z(G) = [G,G]$. So if $G$ is not extraspecial then $\Phi(G) \lneq Z(G)$, so $Z(G) \cong C_{p^2}$ and there is a maximal subgroup $E < G$ with $G = EZ(G)$ and $E \cap Z(G) = \Phi(G)$. It follows that $E$ is extraspecial, with $\Phi(E) = \Phi(G)$. As $E$ is extraspecial it has a maximal subgroup of the form $H \times C_p$, with $\Phi(G) \leq H$. Then $L := HZ(G) \times C_p$ is maximal in $G$, and by Theorem~\ref{thm:CW-lower} and Remark~\ref{rem:subgp}
\[
\gn(kG) \geq \gn(L) \geq t(HZ(G)) \, .
\]
As $HZ(G)$ has order $p^{n-2}$ and exponent~$p^2$, Lemma~\ref{lem:extrasp} says that $t(HZ(G)) = (p+n-3)(p-1)+1$.
\item \textbf{Step 3}: Reduction to the case $G \cong p^{1+2}_-$ \\
We may asume that $G$ has exponent~$p^2$, so $G \cong p^{1+2r}_+$. If $r \geq 2$ then $G$ has a maximal subgroup of the form $H \times C_p$, where $H$ has order $p^{n-2}$ and exponent $p^2$. The result now follows by the argument of the previous step.
\item \textbf{Step 4}: The case $G \cong p^{1+2}_-$ \\
$G$ has a subgroup of the form $C_{p^2}$,
so $\gn(kG) \geq \gn(C_{p^2})$ by Remark~\ref{rem:subgp}\@. But $C_{p^2}$ has ghost number $\frac{p^2-1}{2}$, which exceeds $3p-2$ for $p \geq 7$.
% so by Remark~\ref{rem:subgp} and Theorem~\ref{cyclic}
% \[
% \gn(kG) \geq \gn(C_{p^2}) = \frac{p^2-1}{2} \, .
% \]
% If $p \geq 7$  then $\frac{p^2-1}{2} \geq 3p-2$.
\end{proof}

\end{section}

\begin{section}{The lower bound}

\begin{lem}
\label{notelab}
Let G be a $p$-group of order $p^n$. If $\abs{\Phi(G)} > p$, then
\[
\gn(kG) > n(p-1) + 1 = t(C_p^n) \, .
\]
\end{lem}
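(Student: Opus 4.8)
The plan is to combine Theorem~\ref{thm:CW-lower}, which gives $\gn(kG) \geq t(G/C)$ for any central subgroup $C$ of order~$p$, with Jennings' theorem, which expresses the nilpotency index through the Jennings series: writing $d_r(H) = \log_p|\Gamma_r(H)/\Gamma_{r+1}(H)|$, one has $t(H) = 1 + (p-1)\sum_{r\geq 1} r\,d_r(H)$. Since $\sum_r d_r(H) = \log_p|H|$ and $\sum_{r\geq 2} d_r(H) = \log_p|\Phi(H)|$, this gives the crude bound $t(H) \geq 1 + (p-1)\bigl(\log_p|H| + \log_p|\Phi(H)|\bigr)$. So as soon as we find $C \leq Z(G)$ of order~$p$ with $|\Phi(G/C)| \geq p^2$, we get $\gn(kG) \geq t(G/C) \geq 1 + (p-1)\bigl((n-1)+2\bigr) > n(p-1)+1$ and are done.

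The first step is to record when such a $C$ exists. If $\Omega_1(Z(G)) \not\leq \Phi(G)$, pick $C \leq \Omega_1(Z(G))$ with $C \cap \Phi(G) = 1$; then $\Phi(G)C/C \cong \Phi(G)$ embeds in $\Phi(G/C)$, so $|\Phi(G/C)| \geq |\Phi(G)| \geq p^2$. If $\Omega_1(Z(G)) \leq \Phi(G)$, then every central $C$ of order~$p$ lies in $\Phi(G)$, whence $\Phi(G/C) = \Phi(G)/C$ has order $|\Phi(G)|/p$, which is $\geq p^2$ once $|\Phi(G)| \geq p^3$. This leaves only the case $\Omega_1(Z(G)) \leq \Phi(G)$ and $|\Phi(G)| = p^2$, which is the crux.

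In that case $|\Phi(G/C)| = p$ for every central $C$ of order~$p$, so the crude bound only gives $t(G/C) \geq n(p-1)+1$ and one must work on $G$ itself. Using that the Jennings series is compatible with quotients, one checks $t(G/C) = t(G) - w(p-1)$, where $w$ is the Jennings weight of~$C$ (the largest $w$ with $C \leq \Gamma_w(G)$). Take $C$ central of minimal weight $m_0 = \max\{\,w : \Omega_1(Z(G)) \leq \Gamma_w(G)\,\}$ (which maximizes $t(G/C)$), and --- since $\sum_{r\geq 2}d_r(G) = 2$ --- write $a \leq b$ for the two Jennings degrees $r\ge 2$ in which $\Phi(G)=\Gamma_2(G)$ is detected, counted with multiplicity. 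Then $t(G/C) > n(p-1)+1$ reduces to $a+b \geq m_0 + 3$; as always $a \geq 2$ and $b \geq m_0$, this holds unless $a = 2$ and $b = m_0$ --- that is, unless $\Gamma_3(G) = 1$, or $\Gamma_3(G) \neq 1$ while $\Omega_1(Z(G)) = \Gamma_b(G)$ (which forces $Z(G)$ cyclic). If in addition $p$ is odd and $G$ has exponent $p^2$, there is a second route: quotient by a central $C$ of order~$p$ for which $G/C$ still has exponent $p^2$ --- possible unless $Z(G)$ is cyclic with $\Omega_1(Z(G)) = G^p$ --- and finish by Lemma~\ref{lem:extrasp}, since $(p+n-2)(p-1)+1 > n(p-1)+1$ for odd~$p$.

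The main obstacle is this short list of extremal configurations: essentially the special $p$-groups with $Z(G) = \Phi(G) = [G,G] \cong (C_p)^2$ (which force $n \geq 5$), their exponent-$p^2$ analogues with cyclic centre, and, for $p=2$, the groups with $\log_2|\Phi(G)| = 2$ and $\Gamma_3(G) = 1$ such as $C_4 \times C_4$. For these Theorem~\ref{thm:CW-lower} only attains the bound $n(p-1)+1$ rather than exceeding it, so genuinely more is needed --- plausibly the full strength of Christensen and Wang's Theorem~4.3 applied to a two-dimensional module over $kC_p$ in place of the trivial one, which should raise the estimate by~$m_0$, and for $p=2$ a direct determination of $\gn(kG)$ (or the relevant entries of~\cite{ChristensenWang:ghostNumbers}) for the few groups that survive. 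Confirming that the extremal groups are exhausted by such a list, and settling the $p=2$ cases, is where I expect the real difficulty to lie.
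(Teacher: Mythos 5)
Your core mechanism is exactly the paper's: choose a central subgroup $C$ of order~$p$, apply Theorem~\ref{thm:CW-lower}, and bound $t(G/C)$ from below via Jennings' theorem. Your ``crude bound'' $t(H)\ge 1+(p-1)\bigl(\log_p\abs{H}+\log_p\abs{\Phi(H)}\bigr)$ is precisely the content of the result the paper cites instead of deriving, namely \cite[Thm 6]{Motose:Koshitani}: a non-elementary-abelian group of order $p^m$ has $t\ge(m+1)(p-1)+1$. The paper's entire proof is: take $C\le\Phi(G)\cap Z(G)$ (nontrivial, since $\Phi(G)$ is a nontrivial normal subgroup of a $p$-group); because $\abs{\Phi(G)}>p$, the quotient has $\Phi(G/C)=\Phi(G)/C\ne 1$, so $G/C$ is not elementary abelian and $t(G/C)\ge n(p-1)+1$. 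Note that in your trichotomy this weaker input is available in every case: you do not need $\abs{\Phi(G/C)}\ge p^2$ to reach $n(p-1)+1$, only $\Phi(G/C)\ne 1$.

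The place where you, by your own account, do not finish is the strict inequality, in the residual case $\Omega_1(Z(G))\le\Phi(G)$ and $\abs{\Phi(G)}=p^2$ with extremal Jennings weights (e.g.\ $C_4\times C_4$, or special groups with $Z(G)=\Phi(G)\cong C_p\times C_p$). That is a genuine gap in your write-up as a proof of the statement with ``$>$'', and the escape routes you sketch (Christensen--Wang's Theorem~4.3 with a two-dimensional module, case-by-case work for $p=2$) are not carried out. However, you should be aware that the paper's own proof has the same limitation: the chain $\gn(kG)\ge t(G/C)\ge n(p-1)+1$ only yields the non-strict inequality, and for $C_4\times C_4$ every central quotient of order~$2$ gives $C_4\times C_2$ with $t=5=n(p-1)+1$, so no choice of $C$ rescues strictness by this route. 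Only the non-strict bound is used downstream (in Proposition~\ref{main3}), so the ``$>$'' in the statement appears to outrun the argument given in the source as well. In short: for what the lemma is actually needed for, your first paragraph together with the observation above is already a complete proof, essentially identical to the paper's; your second and third paragraphs are an unfinished attempt to prove something stronger than the paper itself establishes.
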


\begin{proof}
Let $C \leq \Phi(G) \cap Z(G)$ be cyclic of order~$p$. Then $\gn(kG) \geq t(G/C)$ by Theorem~\ref{thm:CW-lower}. Since $G/C$ has order $p^{n-1}$ and is not elementary abelian, we have $t(G/C) \geq n(p-1)+1$ by \cite[Thm 6]{Motose:Koshitani}\@.
But $t(C_p^n) = n(p-1)+1$ by \cite[Thm 1]{MotoseNinomiya:nilpotencyIndex}\@.
\end{proof}

\begin{proof}[Proof of Proposition~\ref{main3}]
Follows from Propositions \ref{evenlbound}~and \ref{poddlbound}, and Lemma~\ref{notelab}\@.
\end{proof}

\begin{proof}[Proof of Theorem \ref{main1}\,(\enref{main-2})]
Let $G$ have order~$p^n$.

First suppose that $p=2$. Let $C \leq Z(G)$ have order~$2$, then $\gn(kG) \geq t(G/C)$ by Theorem~\ref{thm:CW-lower}. Since $G/C$ has order $2^{n-1}$ we have $t(G/C) \geq (n-1)+1 = n = t(C_2^n) - 1$. The result follows since $\gn(kH) \leq t(H)-1$ by Theorem~\ref{upperbound}\@.

Now suppose that $p$ is odd.
By Proposition~\ref{main3} we only need consider the case $G = C_9$, with ghost number~$4$. But $C_3 \times C_3$ has ghost number~$3$ by \cite[Thm 1.1]{ChristensenWang:ghostNumbers}\@.
\end{proof}

\end{section}

% Papers on nilpotency index:
% \cite{Jennings:groupRing},
% \cite{MotoseNinomiya:nilpotencyIndex},
% \cite{Koshitani:nilpotencyIndices}, 
% \cite{Motose:Koshitani}\@.
% Benson vol~1: \cite{Benson:I}.
%Ghosts: \cite{BensonCheboluChristensenMinac:genHyp}, \cite{CheboluChristensenMinac:ghosts}, \cite{ChristensenWang:ghostNumbers}, \cite{ChristensenWang:ghostNumbers2}, \cite{Freyd:generating}, \cite{Stancu:resistant}.
% Probably don't need Wallace:~\cite{Wallace:lowerBounds}\@.
% Aschbacher: \cite{Aschbacher:book}

% \bibliographystyle{abbrv}
% % \bibliographystyle{amsplain}
% \bibliography{united}

\end{document}